\tikzset{>={Latex[width=2mm,length=2mm]}}
\def\greyboxvar#1#2
\def\greybox#1
\renewcommand{\arraystretch}{1.3} 
\def\spam{\mathop{\rm span}\nolimits}
\def\trace{\mathop{\rm trace}\nolimits}
\def\rank{\mathop{\rm rank}\nolimits}
\def\pmat#1{\begin{pmatrix}#1\end{pmatrix}}
\def\question#1{{\bf Question: }#1}
\def\question#1{}
\def\cB{{\cal B}}
\def\cG{{\cal G}}
\def\cO{{\cal O}}
\def\R{\mathbb{R}}
\def\CC{\mathbb{C}}
\def\NN{\mathbb{N}}
\def\HH{\mathbb{H}}
\def\OO{\mathbb{O}}
\def\Cd{\C^d}
\def\Hd{\HH^d}
\def\Rd{\R^d}
\def\C{\mathbb{C}}
\newcommand{\RR}{\mathbb{R}}
\newtheorem{example}{Example}[section]
\newtheorem{proposition}{Proposition}[section]
\newtheorem{conjecture}{Conjecture}
\newenvironment{proof}{{\noindent \it
Proof.}}{\hfill$\Box$\medskip}
\newif\ifdraft\def\draft{\drafttrue\hoffset=.8truecm\showlabeltrue
\def\comment##1{{\bf comment: ##1}}
\headline={\sevenrm \hfill \ifx\filenamed\undefined\jobname\else\filenamed\fi%
(.tex) (as of \ifx\updated\undefined???\else\updated\fi)
 \TeX'ed at {\hour\time\divide\hour by 60{}%
\minutes\hour\multiply\minutes by 60{}%
\advance\time by -\minutes
\the\hour:\ifnum\time<10{}0\fi\the\time\  on \today\hfill}}
}
\def\inpro#1{\langle#1\rangle}
\def\ip#1{\langle\kern-.28em\langle#1\rangle\kern-.28em\rangle_\nu}
\def\cO{{\cal O}}
\def\norm#1{\Vert#1\Vert}
\def\openR{{{\rm I}\kern-.16em {\rm R}}}
\let\ga\alpha
\let\gb\beta
\let\gl\lambda
\let\ga\alpha
\let\gb\beta
\def\inpro#1{\langle#1\rangle}
\def\Stab{\mathop{\rm Stab}\nolimits}
\def\Iff{\hskip1em\Longleftrightarrow\hskip1em}
\def\corr{\hskip0.7em\longleftrightarrow\hskip0.7em}
\def\Implies{\hskip1em\Longrightarrow\hskip1em}
\def\formeq{\the\sectionno.\the\equationno}  
\def\elabel#1/#2/#3/{\global\advance\equationno by 1 %
\ifx#1\empty\else\emember#1%
\ifshowlabel\marginal{\string#1}\fi\fi%
\ifmmode\eqno{#3(\formeq#2)}\else#3\formeq#2\fi} 
\def\makeblanksquare#1#2{
\dimen0=#1pt\advance\dimen0 by -#2pt
      \vrule height#1pt width#2pt depth0pt\kern-#2pt
      \vrule height#1pt width#1pt depth-\dimen0 \kern-#1pt
      \vrule height#2pt width#1pt depth0pt \kern-#2pt
      \vrule height#1pt width#2pt depth0pt
}
\title{\bf 
The quaternionic reflection groups of type $P$ 
and their action on lines in $\HH^2$
}
\title{\bf
Quaternionic MUBs in $\HH^2$ and their reflection symmetries
}
\author{
	Zachary Buckley, Shayne Waldron\\ 
 \\
Department of Mathematics \\ University of Auckland\\
Private
Bag 92019, Auckland, New Zealand\\
e--mail: waldron@math.auckland.ac.nz}
\begin{document}

\maketitle 

\begin{abstract}

We consider the primitive quaternionic reflection groups of type P for $\HH^2$ that are obtained
from Blichfeldt's collineation groups for $\CC^4$.
These are seen to be intimately related to the maximal set of five quaternionic mutually unbiased bases (MUBs) in 
$\HH^2$, for which they are symmetries. 
From these groups, we construct other interesting sets of lines that they fix,
including a new quaternionic spherical $3$-design of $16$ lines in $\HH^2$ 
with angles $\{{1\over5},{3\over5}\}$, which meets the special bound. 
Some interesting consequences of this investigation include finding imprimitive 
quaternionic reflection groups with several systems of imprimitivity,
and finding a nontrivial reducible subgroup which 
has a continuous family of eigenvectors.

\end{abstract}

\bigskip
\vfill

\noindent {\bf Key Words:}
finite tight frames,
quaternionic MUBs (mutually unbiased bases),
quaternionic reflection groups,
representations over the quaternions,
Frobenius-Schur indicator,
projective spherical $t$-designs,
special and absolute bounds on lines,

\bigskip
\noindent {\bf AMS (MOS) Subject Classifications:}
primary
05B30, \ifdraft (Other designs, configurations) \else\fi
15B33, \ifdraft (Matrices over special rings (quaternions, finite fields, etc.) \else\fi
20C25, \ifdraft Projective representations and multipliers \else\fi
20F55, \ifdraft Reflection and Coxeter groups (group-theoretic aspects) \else\fi
20G20, \ifdraft Linear algebraic groups over the reals, the complexes, the quaternions \else\fi
51F15, \ifdraft Reflection groups, reflection geometries \else\fi
\quad
secondary
51M20, \ifdraft (Polyhedra and polytopes; regular figures, division of spaces [See also 51F15]) \else\fi
65D30. \ifdraft (Numerical integration) \else\fi

\vskip .5 truecm
\hrule
\newpage

\section{Introduction}

The (finite irreducible) quaternionic reflection groups, i.e., groups of matrices over the quaternions generated 
by reflections, were classified by Cohen \cite{C80}. 
There are six rank two primitive quaternionic reflection groups with primitive 
complexifications, 
in the families O and P, which were obtained from certain collineation groups for $\CC^4$ of Blichfeldt \cite{B17}.
Here we consider the three groups in the family P, and the small sets of quaternionic lines that they stabilise,
which includes the roots of the reflections themselves.

A set of {\bf mutually unbiased bases} (called  {\bf MUBs}) for $\Rd$, $\Cd$ or $\Hd$ is a collection of orthonormal bases
$\cB_1,\ldots,\cB_m$ for which vectors $v$ and $w$ in different bases have a fixed common angle, i.e.,
$$ |\inpro{v,w}|^2 = {1\over d}, \qquad v\in\cB_j, \ w\in\cB_k, \quad j\ne k. $$
Complex MUBs are of interest in quantum information theory as 
they provide unbiased measurements \cite{I81}, \cite{WF89}. 
They are closely related to SICs 
\cite{ACFW18}, \cite{W18}. 
The maximal number of MUBs in $\CC^6$ is conjectured to be three \cite{MW24}.

For $d=2$, maximal collections of two and three real and complex MUBs are given by
\begin{equation}
\label{twothreeMUBs}
\bigl\{\pmat{1\cr0},\pmat{0\cr1}\bigr\}, \ \bigl\{{1\over\sqrt{2}}\pmat{1\cr\pm1}\bigr\}, \qquad
\bigl\{\pmat{1\cr0},\pmat{0\cr1}\bigr\}, \ \bigl\{{1\over\sqrt{2}}\pmat{1\cr\pm1}\bigr\}, 
\ \bigl\{{1\over\sqrt{2}}\pmat{1\cr\pm i}\bigr\}.
\end{equation}
There is a maximal set of five quaternionic MUBs in $\HH^2$ given by
\begin{equation}
\label{fiveMUBs}
\bigl\{\pmat{1\cr0},\pmat{0\cr1}\bigr\}, \ \bigl\{{1\over\sqrt{2}}\pmat{1\cr\pm1}\bigr\}, 
\ \bigl\{{1\over\sqrt{2}}\pmat{1\cr\pm i}\bigr\}, \ \bigl\{{1\over\sqrt{2}}\pmat{1\cr\pm j}\bigr\}, 
\ \bigl\{{1\over\sqrt{2}}\pmat{1\cr\pm k}\bigr\}.
\end{equation}
These first appeared in Example 3 of \cite{H82} as 
a ``tight $3$-design attaining the absolute bound''.
The Example 4 then extends this to what one might call ``nine octonionic MUBs in $\OO^2$''.
We will not consider the general theory of (maximal) quaternionic MUBs, other than remarking that it begins
with (\ref{fiveMUBs}), the Example 21 of \cite{H82}, which gives nine quaternionic MUBs in $\HH^4$,
and various quaternionic MUB-like configurations \cite{H82}, \cite{K95}, \cite{CKM16}, \cite{BSDL24}.

The rest of the paper is set out as follows. We first consider Blichfeldt's original collineation groups for $\CC^4$,
and construct from them the primitive reflection groups of the type P in \cite{C80} (where details were not given).
This leads to nice presentations for the groups, which include as an imprimitive reflection group based on the five MUBs
extended by adding a single non-monomial reflection matrix.

Next, we observe that the $P$ groups are symmetries of the MUB lines, and consider the associated
permutation action on these lines and the MUB pairs.
We then consider the roots of the reflections, 
and the sets of lines stabilised by these reflection groups. 
In other words, we recognise the reflection groups of type $P$ as symmetry groups of nice (well spaced) 
configurations of quaternionic lines, such as the five quaternionic MUBs.
In particular, we construct a new spherical $3$-design of $16$ lines in $\HH^2$
with angles $\{{1\over5},{3\over5}\}$, which meets the special bound, and
give a nontrivial reducible subgroup which
has a continuous family of eigenvectors.

\vfil\eject
\section{The quaternionic reflection groups of type P}

We assume some basic familiarity with finite irreducible complex reflection groups 
and their classification into those which are primitive and imprimitive \cite{ST54}, \cite{LT09},
and the quaternions $\HH$ and matrices over them, see, e.g., \cite{SS95}, \cite{Z97}, \cite{CS03}, \cite{V21}.

A quaternionic reflection on $\Hd$ is a nonidentity matrix $g\in M_d(\HH)$ 
of finite order which fixes a hyperplane pointwise, equivalently, $\rank(I-g)=d-1$, and a 
finite reflection group is a finite subgroup of $M_d(\HH)$ which is generated
by reflections. Since finite subgroups of $M_d(\HH)$ are conjugate to groups
of unitary matrices, we suppose henceforth that our reflection groups are unitary. 
Therefore, a (unitary) reflection $g$ is defined by a {\bf root vector} $a\in\Hd$,
and a unit scalar $\xi\in\HH$, $\xi\ne1$, for which
$$ g x = x, \quad\forall x\in a^\perp, \qquad g a = a\xi, $$
i.e., the fixed hyperplane is the orthogonal complement of $a$, 
and $g$ has order $n$ if and only $\xi$ is a primitive $n$-th root of unity.
If $a\ne 0$ is a vector, 
then a formula for $g$ is
\begin{equation}
\label{raxiform}
r_{a,\xi} := I-{a(1-\xi)a^*\over\inpro{a,a}}.
\end{equation}
Throughout, $\Hd$ is considered as a right vector space ($\HH$-module), 
so that linear maps are applied on the left,
and we denote the quaternion group by
$$ Q_8:=\{1,-1,i,-i,j,-j,k,-k\}. $$

\begin{example}
By (\ref{raxiform}), the reflection for the root
	$$ a={1\over\sqrt{2}}\pmat{1\cr-b^{-1}}\in\HH^2, \quad |b|=1, $$
is 
$$ r_{a,\xi} 
= {1\over2}\pmat{1+\xi &(1-\xi)b\cr b^{-1}(1-\xi)&b^{-1}(1+\xi)b}, $$
which is monomial if and only if $\xi=-1$, which gives the reflection of order two
$$ r_{a,-1}=\pmat{0&b\cr b^{-1}&0}. $$
Therefore, the reflections of order two given by the ten MUB vectors of (\ref{fiveMUBs})
	are 
\begin{equation}
\label{MUBreflections}
\pmat{-1&0\cr0&1}, \pmat{1&0\cr0&-1}, \quad
\pm\pmat{0&1\cr 1&0}, \quad
\pm\pmat{0&i\cr -i&0}, \quad
\pm\pmat{0&j\cr -j&0}, \quad
\pm\pmat{0&k\cr -k&0}.
\end{equation}
\end{example}

A reflection group $G$ is said to be {\bf imprimitive} if the space on which it acts
can be decomposed into proper subspaces which it permutes, a so called system of imprimitivity,
otherwise it is said to be {\bf primitive}. For a reflection group (which is unitary)
acting on $\HH^2$, we can assume the system of imprimitivity is given by the standard basis vectors
$$ V_1=\spam_\HH e_1, \qquad  V_2=\spam_\HH e_2, $$
so that its elements have the (monomial) form
$$ \pmat{a&0\cr0&d}, \quad \pmat{0&b\cr c&0}, \qquad |a|=|b|=|c|=|d|=1. $$

Blichfeldt \cite{B17} (Chapter VII) classified the irreducible collineation groups for $\CC^4$.
Collineation groups are groups of matrices defined up to a scalar multiple 
(in modern terminology linear maps defined on projective spaces). 
The groups (A), (C), (K) of \cite{B17} have orders $60\phi$, $360\phi$, $720\phi$ 
(here $\phi$ indicates the order of the subgroup of scalar matrices,
which is unimportant in the theory), and correspond to the groups of type O in \cite{C80} of
orders $120$, $720$, $1440$ (see \cite{W24}). 

Here we consider the groups $14^\circ$, $16^\circ$, $18^\circ$ of orders
$10\cdot 16\phi$, $60\cdot 16\phi$, $120\cdot 16\phi$, 
which correspond to the groups of type P in \cite{C80} of orders $320$, $1920$, $3840$.
The collineation groups $13^\circ,\ldots,21^\circ$ of \cite{B17} (page 172) are primitive
collineation groups generated by an imprimitive group $K$ of order $16\phi$, generated by
$$ A_1=\pmat{1&0&0&0 \cr 0&1&0&0 \cr 0&0&-1&0 \cr 0&0&0&-1 }, \quad
A_2=\pmat{1&0&0&0 \cr 0&-1&0&0 \cr 0&0&-1&0 \cr 0&0&0&1 }, $$
$$ A_3=\pmat{0&1&0&0 \cr 1&0&0&0 \cr 0&0&0&1 \cr 0&0&1&0 }, \quad
A_4=\pmat{0&0&1&0 \cr 0&0&0&1 \cr 1&0&0&0 \cr 0&1&0&0 }, $$
the element $T$, 
which gives the primitive group $13^\circ$, and various other elements given by
$$ S={1+i\over\sqrt{2}}\pmat{i&0&0&0\cr 0&i&0&0\cr 0&0&1&0\cr 0&0&0&1}, \qquad
T={1+i\over2}\pmat{-i&0&0&i\cr 0&1&1&0\cr 1&0&0&1\cr 0&-i&i&0}, $$
$$ R={1\over\sqrt{2}}\pmat{1&i&0&0\cr i&1&0&0\cr 0&0&i&-1\cr 0&0&1&-i}, \quad
A={1+i\over\sqrt{2}}\pmat{1&0&0&0\cr 0&i&0&0\cr 0&0&i&0\cr 0&0&0&1}, \quad
B={1+i\over\sqrt{2}}\pmat{1&0&0&0\cr 0&1&0&0\cr 0&0&1&0\cr 0&0&0&-1}. $$
These groups, which have $K$ as a normal subgroup, are generated as follows
\begin{align}
\label{13-21defs}
& 13^\circ:\ K,T, \qquad\qquad 14^\circ:\ K,T,R^2, \qquad\ 15^\circ:\ K,T,R, \cr
& 16^\circ:\ K,T,SB, \qquad 17^\circ:\ K,T,BR, \qquad 18^\circ:\ K,T,A, \cr
& 19^\circ:\ K,T,B, \qquad\ \ 20^\circ:\ K,T,AB, \qquad 21^\circ:\ K,T,S.
\end{align}

A group $G$ of complex matrices in $M_{2d}(\CC)$ gives rise to 
a group of quaternionic matrices in $M_d(\HH)$ if it
is conjugate to a group of matrices of the 
{\bf symplectic  form}
\begin{equation}
\label{C2dHdcorres}
\pmat{A&-B\cr\overline{B}&\overline{A}}\in M_{2d}(\CC)
\Iff A+Bj  \in M_d(\HH).
\end{equation}
The {\bf Frobenius-Schur indicator} of a complex representation of a finite group $G$ is
$$ \iota\chi 
:= {1\over|G|}\sum_{g\in G} \chi(g^2)\in\{-1,0,1\}, $$
where $\chi$ is the character of the representation.
This takes the value $-1$ if and only if the representation 
of $G$ corresponds to a quaternionic representation via
(\ref{C2dHdcorres}) \cite{G11}. 
Blichfeldt's collineation groups $13^\circ,\ldots,21^\circ$ 
are given as matrix groups over $\CC$, with a subgroup of scalar matrices (of order $\phi$). Changing the 
subgroup of scalar matrices (which gives the same collineation group), 
changes the Frobenius-Schur indicator, and so some care must be taken. 
Indeed, in view of (\ref{C2dHdcorres}), the collineation group must be
presented so that its matrices have a real trace,
and consequently $\pm I\in M_{2d}(\CC)$ are the only allowable scalar matrices.
It is still quite possible for the group of quaternionic matrices to contain
scalar matrices, e.g., $iI,jI\in M_d(\HH)$ correspond via (\ref{C2dHdcorres}) to
the nonscalar symplectic matrices
$$ \pmat{iI&0\cr0&-iI}, 
\pmat{0&-I\cr I& 0}\in M_{2d}(\CC). $$

We first consider the group $K$, which is an imprimitive normal subgroup
of all the collineation groups, together with $T$. 
The group generated by $A_1,A_2,A_3,A_4$ has small group identifier
$\inpro{32, 49}$ and has Frobenius-Schur indicator $1$, and so 
does not correspond to a group in $M_2(\HH)$.
The trace of $T$ is ${1}$, but it is not in the symplectic form (\ref{C2dHdcorres}). 
Conjugation of $T$ (equivalently $-T$) by the permutation matrices for the permutations
$(1\, 4)$, $(2\, 3)$, $(1\, 3\, 4\, 2)$, $(1\, 2\, 4\, 3)$ gives a 
matrix of the form (\ref{C2dHdcorres}). Calculations show that whatever permutation
is taken, the quaternionic groups obtained are identical elementwise (just with different
generators). We take
$$ P=P_{(1\, 4)} =\pmat{0&0&0&1\cr 0&1&0&0\cr 0&0&1&0\cr 1&0&0&0}, $$
which conjugates $A_1,iA_2,iA_3,A_4$ to the symplectic form (\ref{C2dHdcorres}), i.e.,
\begin{equation}
\label{KHgenerators}
\pmat{-1&0\cr0&1},\quad \pmat{i&0\cr0&-i},\quad
\pmat{-k&0\cr0&-k}, \quad \pmat{0&1\cr1&0}.
\end{equation}
The group generated by $A_1,iA_2,iA_3,A_4$ has identifier $\inpro{32, 50}$, and
Schur-Frobenius indicator $-1$. Here $A_2,A_3$, which have zero trace, were
multiplied by $i$ to obtain the symplectic form. The group $K$ generated by
the quaternionic matrices of (\ref{KHgenerators}) is
the imprimitive reflection group generated by 
the ten MUB reflections of (\ref{MUBreflections}),
which are all of its reflections. This is the group denoted by 
$$ K=G_{Q_8}(Q_8,C_2) =\cG(\{1,i,j,k\},\{\}), \qquad 
K/\inpro{-I}=C_2\times C_2\times C_2\times C_2,$$
in \cite{W25}, which is generated by the reflections
\begin{equation}
\label{Krefgens}
\pmat{0&1\cr1&0},\quad \pmat{0&i\cr-i&0},\quad \pmat{0&j\cr-j&0},\quad \pmat{0&k\cr-k&0}.
\end{equation}
The $32$ elements of $K$ are
$$ \pmat{q&0\cr0&\pm q}, \quad \pmat{0&q\cr\pm q&0}, \qquad q\in Q_8. $$

Despite four of the reflection pairs for the MUBs (\ref{MUBreflections}) being nondiagonal matrices, with
the other being diagonal, each pair plays an equivalent role. 
For example, the orbits of the reflections under conjugation by $K$ are of size two,
giving the MUB pairs of reflections, and the minimal generating sets of reflection 
for $K$ are precisely any four reflections, which come from different MUB pairs (orbits).
It is also interesting to note that each of the five MUBs of (\ref{fiveMUBs}) 
are a system of imprimitivity for $K$. 
By way of comparison
(see \cite{LT09} Theorem 2.16), the only complex reflection groups on $\RR^2$ or $\CC^2$ 
which have more than one system of imprimitivity are $G(2,1,2)\cong G(4,4,2)$ and
$G(4,2,2)$, which
have three systems of imprimitivity given by the three complex MUBs of (\ref{twothreeMUBs}).

The conjugate of $T$ by $P$ gives a symplectic matrix of order ten, i.e.,
\begin{equation}
\label{Tmat}
	t:={1\over 2}\pmat{j-k& 1-i\cr-j-k&1+i},
\end{equation}
which is not a reflection. It maps the lines given by the MUB vectors to themselves, 
e.g.,
$$ \pmat{1\cr0} 
\mapsto \pmat{1\cr -i}
\mapsto \pmat{1\cr k}
\mapsto \pmat{1\cr -1}
\mapsto \pmat{1\cr j}
\mapsto \pmat{1\cr 0}. $$
The group of quaternionic matrices generated by $K$, i.e., 
the reflections (\ref{Krefgens}), and $t$ of (\ref{Tmat}), contains no further reflections, and hence is not a reflection group.

The other generators from (\ref{13-21defs}) which have real traces and 
conjugate under $P$ to a symplectic matrix are $R^2,R,SB$, and those
which do not have real traces are
$$ 
\trace(BR)=2i, \quad
\trace(A)=\trace(S)=2\sqrt{2}i, \quad
\trace(B)=\sqrt{2}(1+i). $$
After scaling to obtain a real trace, 
only $iA$ conjugates under $P$ to the symplectic form.
After conjugation with $P$, the matrices $R^2$, $R$, $SB$, $iA$ 
are in the symplectic form, giving
\begin{equation}
\label{othergens}
\pmat{-1&0\cr 0&-k}, \quad
{1\over\sqrt{2}}\pmat{-i-j&0\cr0&1-k}, \quad
\pmat{-i&0\cr0&-1},\quad
{1\over\sqrt{2}}\pmat{-1+i&0\cr0&-1-i}.
\end{equation}
Hence there are primitive quaternionic groups of matrices in 
$U_2(\HH)$ corresponding to Blichfeldt's groups 
$13^\circ,\ldots,16^\circ$, $18^\circ$, which are generated by the 
corresponding matrices from (\ref{Krefgens}), (\ref{Tmat}), (\ref{othergens}).
A calculation in {\tt magma} shows that three of these are reflection groups, 
i.e.,
$$ G_{14^\circ}=\inpro{K,t,\pmat{-1&0\cr 0&-k}}, \quad
G_{16^\circ}=\inpro{K,t,\pmat{-i&0\cr0&-1}}, \quad
G_{18^\circ}=\inpro{K,t,{1\over\sqrt{2}}\pmat{-1+i&0\cr0&-1-i}}. $$
Since $K$ is contained in these groups, adding the last generator is the
same as adding the reflections
\begin{equation}
\label{reflectionsadded}
\pmat{k&0\cr 0&1}, \quad \pmat{i&0\cr0&1}, \quad 
{1\over\sqrt{2}}\pmat{0&-1+i\cr-1-i&0} = r_{(1,{1+i\over\sqrt{2}}),-1},
\end{equation}
respectively, and since
$$ t\pmat{k&0\cr 0&1} = {1\over2}\pmat{1+i&1-i\cr 1-i &1+i}=r_{(1,-1),i}, $$
is a reflection (of order $4$), we can conclude (by hand) that the above are 
reflection groups, with
\begin{equation}
\label{G14naivedef}
G_{14^\circ} = \inpro{K,r_{(1,0),k},r_{(1,1),i}},
\end{equation}
since $r_{(1,1),i}\in G_{14^\circ}$. 
The $30$ reflections in $G_{14^\circ}$ are given by the $\xi,a$ pairs
\begin{equation}
\label{G14refs}
\inpro{i}:\quad \pmat{1\cr\pm1}, \ \pmat{1\cr\pm k}, \quad
 \inpro{j}:\quad \pmat{1\cr\pm i}, \ \pmat{1\cr\pm j}, \quad
 \inpro{k}:\quad \pmat{1\cr0},\ \pmat{0\cr1},
\end{equation}
and the $70$ reflections in $G_{16^\circ}$ by
\begin{equation}
\label{70reflections}
Q_8:\quad \pmat{1\cr0},\ \pmat{0\cr1},\  \pmat{1\cr\pm1}, \ \pmat{1\cr\pm i}, \
\pmat{1\cr\pm j}, \ \pmat{1\cr\pm k}.
\end{equation}
We observe from above that $G_{14^\circ}$ is a subgroup of $G_{16^\circ}$.
A calculation shows that there are six conjugates
$G_{q_1,q_2}$, $q_1\ne q_2$, $q_1,q_2\in\{i,j,k\}$, of $G_{14^\circ}$ in $G_{16^\circ}$,
given by the reflections
\begin{equation}
\label{refsMUBform}
\inpro{q_1}:\quad \pmat{1\cr0},\ \pmat{0\cr1}, \quad
\inpro{q_2}:\quad \pmat{1\cr\pm 1}, \ \pmat{1\cr\pm q_1}, \quad
\inpro{q_1 q_2}:\quad \pmat{1\cr\pm q_2}, \ \pmat{1\cr\pm q_1 q_2}.
\end{equation}
In view of (\ref{G14naivedef}),
for $G_{q_1,q_2}$ we can take any generators for $K$, together with
the reflections $r_{(1,0),q_1}$, $r_{(1,1),q_2}$. It turns out these generators
alone are sufficient, i.e.,
\begin{equation}
\label{Pq1q2nice}
G_{q_1,q_2} = \inpro{r_{(1,0),q_1},r_{(1,1),q_2}}
= \inpro{\pmat{q_1&0\cr0&1},
{1\over2} \pmat{1+q_2&-1+q_2\cr-1+q_2&1+q_2}}.
\end{equation}

The scalar $\xi$ for a reflection $r_{a,\xi}$ depends on the
particular multiple of the root taken, i.e.,
$$ r_{a,\xi} = r_{a\gb,\gb^{-1}\xi\gb}, \qquad\gb\in\HH^*. $$
In \cite{C80}, the group $H_a$ of scalars associated with a root is 
taken to be the same for all roots $a$. To change the scalars $\xi=q_2$ to $q_1$
and $\xi=q_1q_2$ to $q_1$ in (\ref{refsMUBform}), we take 
$\gb= (1-q_1 q_2)$ and $\gb= (1-q_2)$, to obtain
$$ 
\inpro{q_1}:\quad \pmat{1\cr0},\ \pmat{0\cr1},\
\pmat{1\cr\pm 1}(1-q_1 q_2), \ \pmat{1\cr\pm q_1}(1-q_1 q_2),\
\pmat{1\cr\pm q_2}(1-q_2), \
\pmat{1\cr\pm q_1 q_2}(1-q_2). $$
Taking $q_1=j$, $q_2=k$ above gives the root system of \cite{C80} (Table II),
and so we conclude the group given there is $G_{j,k}$, whereas the group
given by (\ref{G14refs}) is $G_{k,i}$.

We now consider generators for the groups of type $P$.
For the first $G_{14^\circ}$, 
(\ref{Pq1q2nice}) gives
\begin{equation}
\label{P1nice}
P_1=H_{320}:=G_{i,j} = \inpro{r_{(1,0),i},r_{(1,1),j}}
= \inpro{\pmat{i&0\cr0&1}, 
{1\over2} \pmat{1+j&-1+j\cr-1+j&1+j}}, 
\end{equation}
where $|P_1|=320$. The comment of (\ref{reflectionsadded}) implies that the next group 
$G_{16^\circ}$ is
\begin{equation}
\label{P2nice}
P_2=H_{1920}:=\inpro{P_1,r_{(1,0),j}}
= \inpro{ \pmat{i&0\cr0&1}, \pmat{j&0\cr0&1}, {1\over2} \pmat{1+j&-1+j\cr-1+j&1+j}}, 
\end{equation}
where $|P_2|=1920$. Both of these groups have the five MUBs as 
the roots of their reflections. Similar considerations give $G_{18^\circ}$ as
$$ G_{18^\circ}=\inpro{P_2,r_{(1,{1+i\over\sqrt{2}}),-1}}
= \inpro{ P_2, {1\over\sqrt{2}} \pmat{0&-1+i\cr-1-i&0}}. $$
It is easily verified that $G_{18^\circ}$ contains the reflection 
given by the Fourier matrix, i.e.,
\begin{equation}
\label{Ftransform}
F:=r_{(-1,1+\sqrt{2}),-1} =
{1\over\sqrt{2}}\pmat{1&1\cr1&-1}\in G_{18^\circ},
\end{equation}
which leads to the generating reflections
\begin{equation}
\label{P3nice}
P_3= H_{3840} 
:= \inpro{ \pmat{i&0\cr0&1},\pmat{j&0\cr0&1},{1\over\sqrt{2}}\pmat{1&1\cr1&-1}},
\end{equation}
where $|P_3|=3840$.

\begin{figure}[h!]
\begin{center}
\begin{tikzpicture}
    \matrix (A) [matrix of nodes, row sep=1.0cm, column sep = 0.1 cm]
    {
	    &&&&& $P_3$ &&&&& \\
	    &&&&& $P_2$ &&&&& \\
	    $P_1$ && $P_1$ && $P_1$ && $P_1$ && $P_1$ && $P_1$ \\
	    &&&&& $K$ &&&&& \\
    };
    \draw (A-1-6)--(A-2-6);
    \draw (A-2-6)--(A-3-1);
    \draw (A-2-6)--(A-3-3);
    \draw (A-2-6)--(A-3-5);
    \draw (A-2-6)--(A-3-7);
    \draw (A-2-6)--(A-3-9);
    \draw (A-2-6)--(A-3-11);
    \draw (A-3-1)--(A-4-6);
    \draw (A-3-3)--(A-4-6);
    \draw (A-3-5)--(A-4-6);
    \draw (A-3-7)--(A-4-6);
    \draw (A-3-9)--(A-4-6);
    \draw (A-3-11)--(A-4-6);
\end{tikzpicture}
\end{center}
\vskip-0.7truecm
	\caption{The $P$ groups: $K\lhd P_1,P_2,P_3$ and $P_2\lhd P_3$.
	The group $P_1$ occurs six times 
	as a subgroup of $P_2$ (a single conjugacy class), i.e., as
	$G_{q_1,q_2}$, $q_1\ne q_2$, $q_1,q_2\in\{i,j,k\}$. }
\end{figure}
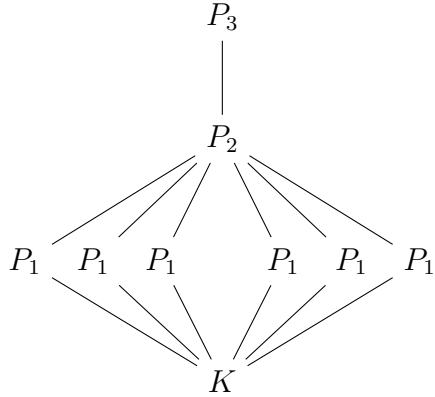

The reflection group $P_3$ has a $110$ reflections, consisting of the 
$70$ reflections (\ref{70reflections}) of $P_2$, and $40$ reflections 
of order two which are the orbit of $F$ under the conjugation action of $P_2$.
These are given by the root lines
\begin{equation}
\label{40rootlinesP3}
\pmat{\sqrt{2}\cr p+q},\ p+q\ne0, \quad \{p,q\}\subset Q_8, \qquad
\pmat{1+\sqrt{2}\cr q},\ \pmat{q\cr1+\sqrt{2}},  \quad q\in Q_8.
\end{equation}
The first $24$ of these give rise to monomial reflections, i.e.,
\begin{equation}
\label{bform}
\pmat{0&b \cr b^{-1}&0},\quad 
b={p+q\over\sqrt{2}}\ne0, \quad \{p,q\}\subset Q_8,
\end{equation}
and the last $16$ give rise to the non-monomial reflections
\begin{equation}
\label{nonmomform}
{1\over\sqrt{2}}\pmat{-1&-\overline{q}\cr-q&1}, \,
 {1\over\sqrt{2}}\pmat{1&-q\cr-\overline{q}&-1}, \quad q\in Q_8.
\end{equation}

\begin{example}
\label{fiveorder2}
It is interesting to observe that $P_3$ is generated by five of the 
$40$ reflections of order two given by (\ref{40rootlinesP3}), e.g.,
$$ \pmat{0&b\cr b^{-1}&0}, \quad \sqrt{2}b\in\{1+i,1-i,1+j,1+k\}, \qquad
{1\over\sqrt{2}}\pmat{1&1\cr1&-1}. $$
This is a consequence of $P_3$ being given in \cite{C80} via a root
system based on 
(\ref{40rootlinesP3}).
\end{example}

\begin{table}[!h]
\caption{Generating reflections for $K$ 
and the $P$ groups.
The group $P_3$ is also generated by five of the $40$ reflections of order two 
given by (\ref{bform}) and (\ref{nonmomform}) (see Example \ref{fiveorder2}). }
\label{PKgenrefs}
\begin{center}
\begin{tabular}{ |  >{$}l<{$} | >{$}l<{$} |} 
\hline
        &\\[-0.3cm]
G 
	& \hbox{generating reflections} 
	\\[0.1cm]
\hline
&\\[-0.3cm]
K^\dagger & \pmat{-1&0\cr0&1}, \pmat{0&1\cr1&0},\pmat{0&i\cr-i&0}, \pmat{0&j\cr-j&0},
	\pmat{0&k\cr-k&0} \\[0.4cm]
	P_1 & \pmat{i&0\cr0&1}, {\displaystyle {1\over2} \pmat{1+j&-1+j\cr-1+j&1+j}} \\[0.4cm]
	P_2 & \pmat{i&0\cr0&1}, \pmat{j&0\cr0&1}, {\displaystyle {1\over2} \pmat{1+j&-1+j\cr-1+j&1+j} } \\[0.4cm]
	P_3 & \pmat{i&0\cr0&1},\pmat{j&0\cr0&1}, {\displaystyle {1\over\sqrt{2}}\pmat{1&1\cr1&-1} } \\[0.4cm]
\hline
	\multicolumn{2}{l}{\hbox{\footnotesize ${}^\dagger$ $K$ is generated by any four of these reflections. }}

\end{tabular}
\end{center}
\end{table}

\begin{example} 
\label{newrefgroup}
Consider the reflection group of order $64$ given by 
$$ P_0=\inpro{ \pmat{0&1\cr1&0}, \pmat{0&i\cr-i&0}, \pmat{0&j\cr-j&0}, 
{1\over\sqrt{2}}\pmat{1&1\cr1&-1} }, $$
which is generated by $K$ and the Fourier matrix $F$ of (\ref{Ftransform}).
This was initially assumed to be primitive, 
and hence a previously unknown such group, 
but conjugation by the matrices
$$ {1\over\sqrt{2}}\pmat{1&i\cr i&1},
	{1\over\sqrt{2}}\pmat{1&j\cr j&1},
	{1\over\sqrt{2}}\pmat{1&k\cr k&1}
\in P_3 $$ 
gives a monomial, and hence
imprimitive, reflection group, which has three systems of imprimitivity.
It is the imprimitive reflection group $G(4,1,2,2)$ 
in the family of \cite{W25}.
\end{example}

There are only four imprimitive complex reflection groups with more than one
system of imprimitivity (Theorem 2.16 \cite{LT09}), i.e., two in $\CC^2$ 
(three systems), one in $\CC^3$ (four systems), and one in $\CC^4$ 
(three systems).
Therefore, given the above example (three systems) and $K$ (five systems), 
it appears that the number of systems of imprimitivity for
quaternionic reflection groups is worthy of some study.


\section{The maximal imprimitive reflection subgroups}

Our presentations (\ref{P1nice}), (\ref{P2nice}), (\ref{P3nice}) 
of the $P$ reflection groups (see Table \ref{PKgenrefs}) involve just a single non-monomial reflection.
Therefore, we can view them as imprimitive (monomial) reflection groups 
with a single non-monomial reflection added. We now identify these imprimitive 
reflection groups.

The non-diagonal reflections in a reflection group of rank two
have the form (\ref{bform}) for a set $L$ of $b\in\HH$,
called a ``reflection system'' \cite{W25},
which satisfy the conditions
\begin{enumerate}
\item $L$ generates a finite group $K=\inpro{L}$.
\item $L$ is closed under the binary operation $(a,b)\mapsto a \circ b := ab^{-1}a$.
\item $1\in L$.
\end{enumerate}
If $L$ is the closure of $X\subset\HH$ under $(a,b)\mapsto a \circ b$,
then we say $X$ generates the reflection system $L$, and we write $L=L(X)$.
In view of (\ref{MUBreflections}), (\ref{bform}), the reflection systems for the groups 
$P_1$, $P_2$ are $Q_8=L(\{1,i,j,k\})$, and for $P_3$ it is 
$$ L_{32}:= L\bigl(\bigl\{1,{1+i\over\sqrt{2}},{1+j\over\sqrt{2}},{1+k\over\sqrt{2}}\bigr\}\bigr)
=Q_8 \cup \bigl\{{p+q\over\sqrt{2}}\ne0 : \{p,q\}\subset Q_8\bigr\}. $$
The $32$-element reflection system $L_{32}$ has $K$ the binary octahedral group of
order $48$ given by
$$ \cO = \inpro{{1+i\over\sqrt{2}},{1+i+j+k\over2}}
=\inpro{{1+i\over\sqrt{2}},{1+j\over\sqrt{2}} }. $$
It is equivalent to that given in \cite{W25}, i.e.,
$$ L_{32} ={1+i\over\sqrt{2}} L_{32}^\cO, \qquad
L_{32}^\cO:=L\bigl(\bigl\{1,{1+i\over\sqrt{2}},{1+i+j+k\over2}\bigr\}\bigr). $$
The corresponding subreflection systems are
$$ 
L_{20} = L\bigl(\bigl\{1,{1+i\over\sqrt{2}},{1+j\over\sqrt{2}},k\bigr\}\bigr), \quad
L_{18} = L\bigl(\bigl\{1,{1+i\over\sqrt{2}},{1+j\over\sqrt{2}}\bigr\}\bigr), \quad
L_{14} = L\bigl(\bigl\{1,{i-j\over\sqrt{2}},{i-k\over\sqrt{2}},{j+k\over\sqrt{2}}\bigr\}\bigr).
$$

The imprimitive quaternionic reflection groups of rank two have a canonical form
$G=G_K(L,H)$, where $L$ is a reflection system with $K=\inpro{L}$, which 
gives the nondiagonal reflections, and $H$ is normal subgroup of $K$, 
which gives the diagonal reflections in $G$, i.e.,
$$ \pmat{h&0\cr0&1}, \, \pmat{1&0\cr0&h}, \qquad h\in H,\, h\ne1. $$
The monomial reflections of $G=P_1,P_2,P_3$ generate the following
imprimitive reflection groups $G_M$
$$ G_{Q_8}(Q_8,C_4), \quad  G_{Q_8}(Q_8,Q_8), \quad
G_\cO(L_{32}^\cO,Q_8), $$
which have orders $64$, $128$, $768$.

For a reflection group $G$, the reflections for a given root $a$ together with
the identity form a subgroup $R_a$.
The group $G$ acts on the reflection subgroups $R_a$ via conjugation.
We will refer to the orbits of this action as the {\bf reflection type}
or {\bf reflection orbits} of $G$. If the $m$ reflection orbits are given by 
$R_{a_1},\ldots,R_{a_m}$, we will often write the reflection type as
$n_1 R_{a_1},\ldots,n_m R_{a_m}$, where $n_j$ is the orbit size, and $R_{a_j}$ 
is an abstract group.

We can now summarise the structure of the $P$ groups.

\begin{table}[!h]
	\caption{The $P$ groups and their monomial reflection subgroup $G_M$ 
	(each of these appears five times, corresponding to the five sets of imprimitivity for $K$). }
        \vskip0.3truecm
\label{TOIgroups-table}
\begin{tabular}{ |  >{$}l<{$} | >{$}c<{$} | >{$}l<{$} | >{$}l<{$} | >{$}l<{$} | >{$}l<{$} | >{$}l<{$} | >{$}l<{$} |}
\hline
        &&&&&&&\\[-0.3cm]
G & |G| & \hbox{refs} & \hbox{ref orbits} & G_M &
|G_M| &\hbox{refs} & \hbox{ref orbits} \\[0.1cm]
\hline
&&&&&&&\\[-0.3cm]
P_3 & 3840 & 110 & 10Q_8, 40C_2 & G_\cO(L_{32}^\cO,Q_8) & 768 & 46 & 2Q_8, 8C_2, 24C_2\\ 
P_2 & 1920 & 70 & 10Q_8 & G_{Q_8}(Q_8,Q_8) & 128 & 22 & 2Q_8, 8C_2  \\
P_1 & 320 & 30 & 10C_4 & G_{Q_8}(Q_8,C_4) & 64 & 14 & 2C_4, 4C_2, 4C_2 \\ 
K & 32 & 10 &  & G_{Q_8}(Q_8,C_2) & 32 & 10 & 2C_2,2C_2,2C_2,2C_2,2C_2 \\[0.1cm]

\hline
\end{tabular}
\end{table}

\section{MUB symmetries}

The $P$ groups map the ten MUB lines of (\ref{fiveMUBs}) to themselves, i.e.,
are symmetries for them. This is easily seen by action of the
generators (\ref{P3nice}) for $P_3$ on the lines, e.g.,
$$ \pmat{1&1\cr1&-1}\pmat{1\cr0}=\pmat{1\cr1}, \quad
\pmat{1&1\cr1&-1}\pmat{0\cr1}=\pmat{1\cr-1}, \quad
\pmat{1&1\cr1&-1}\pmat{1\cr q} 
=\pmat{1\cr-q }(1+q). $$
Moreover, the columns of each matrix in $P_3$ are a MUB pair.
We expect that $P_3\subset U_2(\HH)$ is the (full) symmetry group
of the ten MUB lines.

We order the ten MUB lines as in (\ref{fiveMUBs}) with the $\pm$ entries 
ordered $+$, $-$. With this labelling, the generators in Table \ref{PKgenrefs}
correspond to the following permutations
$$ \pmat{i&0\cr0&1} \corr (3\, 6\, 4\, 5)(7\, 9\, 8\, 10), \qquad
{1\over2} \pmat{1+j&-1+j\cr-1+j&1+j} \corr (1\, 7\, 2\, 8)(5\, 10\, 6\, 9), $$ 
$$ \pmat{j&0\cr0&1} \corr (3\, 8\, 4\, 7)(5\, 10\, 6\, 9), \qquad
{1\over\sqrt{2}}\pmat{1&1\cr1&-1} \corr (1\, 3)(2\, 4)(5\, 6)(7\, 8)(9\, 10), $$
and the matrix $t$ of (\ref{Tmat}) to
$$ t={1\over 2}\pmat{j-k& 1-i\cr-j-k&1+i} \corr (1\, 6\, 9\, 4\, 7)(2\, 5\, 10\, 3\, 8). $$
The kernel of the action of $P_3$ on the MUB lines is $\inpro{-I}$, i.e.,
$P_3/\inpro{-I}$ acts faithfully on the ten lines. 
It is clear from the above permutations that $P_3$ acts on the five MUB pairs.
With these ordered as in (\ref{fiveMUBs}), the permutations corresponding to the
above elements are
$$ (2\, 3)(4\, 5), \qquad
(1\, 4)(3\, 5), \qquad
(2\, 4)(3\, 5), \qquad
(1\, 2),  $$
respectively. The kernel of this action on the five MUB pairs is the 
imprimitive reflection group $K$, and
$$ P_3/K \cong S_5, $$
i.e., any permutation of the five MUB pairs is possible. Similarly, we have
$$ P_2/K \cong A_5, \qquad P_1/K \cong D_5 \ \hbox{(dihedral group of order $10$)}. $$
The quotients of $P_j/K$ above are discussed in the proof of Theorem 4.2 in 
\cite{C80}, with the representation for $P_1$ dating back to Crowe \cite{Cr59}. 
The groups of type $P$ were introduced in Proposition 4.1 of \cite{C80} as
\begin{itemize}
\item $G$ (of rank $2$) is an extension of a subgroup of $S_6$ by ${\bf D}_2\circ D_4$
(where ${\bf D}_2\circ D_4\cong K$).
\end{itemize}
The imprimitive reflection group $P_0$ of Example \ref{newrefgroup}
can be considered to be of type P, as follows. 
Its generators (the first three are in $K$) permute the MUB pairs as follows
$$ \pmat{0&b\cr b^{-1}&0},\ b\in\{1,i,j\} \corr (), \qquad
{1\over\sqrt{2}}\pmat{1&1\cr1&-1} \corr (1\, 2), $$
so that 
$$ P_0/K \cong \inpro{(1\, 2)} = C_2, $$
i.e., $P_0$ is of type $P$.
The group $P_0$ is not normal in $P_3$, having five conjugates.


\section{Spherical designs and small sets of invariant lines}

We have seen that the orbit of the vector/line $v=e_1$ (or any MUB line) 
under the action of $P_3$ (and its subgroups $P_1$, $P_2$) is the
MUB lines of (\ref{fiveMUBs}).
These ten lines are the roots of the reflection groups $P_1$ and $P_2$,
but not $P_3$ (which has $40$ additional root lines).

These lines are well-spaced, in the sense that the set of angles $|\inpro{v,w}|^2$ between
different lines given by unit vectors $v,w\in\HH^2$ is the small set $\{0,{1\over2}\}$.
We will give a related notion of being well-spaced, that of
being a ``spherical design'' \cite{DGS77}, which corresponds to the lines being a cubature rule for the
sphere. In this section, we will use a general method, which does not require the
groups $G\subset U_d(\HH)$ involved be reflection groups, 
to find small sets of $G$-invariant lines which provide 
good spherical designs.
This will give the ten MUB lines, and also other interesting configurations
(see Table \ref{Fiducialtable}).

Let $t\in\{1,2,\ldots\}$. 
The set of lines given by $n$ unit vectors $(v_j)$ in $\Hd$ is called a
{\bf spherical $(t,t)$-design} for $\Hd$ if they give equality in the 
inequality
\begin{equation}
\label{t-designdef}
\sum_{j=1}^n\sum_{k=1}^n |\inpro{v_j,v_k}|^{2t} 
\ge c_t(\Hd) \Bigl(\sum_{\ell=1}^n \norm{v_\ell}^{2t}\Bigr)^2,
\qquad c_t(\Hd):=\prod_{j=0}^{t-1} {2+j\over 2d+j},
\end{equation}
i.e., 
\begin{equation}
\label{t-designdef}
\sum_{j=1}^n\sum_{k=1}^n |\inpro{v_j,v_k}|^{2t}
= c_t(\Hd) \Bigl(\sum_{\ell=1}^n \norm{v_\ell}^{2t}\Bigr)^2,
\end{equation}
see \cite{W20c} for details.
These can be viewed as a cubature rule for the unit sphere
in $\Hd$, and are equivalent to the quaternionic {\bf spherical $t$-designs} of
\cite{H82}. Therefore, we are interested in large values of $t$ and small numbers
of lines $n$. We note
$$ 
c_1(\Hd) = {1\over d}, \qquad 
c_2(\Hd) = {3\over d(2d+1)}, \qquad 
c_3(\Hd) = {6\over d(2d+1)(d+1)}. $$

It follows from (\ref{t-designdef}) that the orbit $(g x)_{g\in G}$ of a
nonzero vector $x\in\Hd$ under the action 
of finite group of unitary matrices $G\subset U_d(\HH)$
is a spherical $(t,t)$-design if and only if
\begin{equation}
\label{pG(t)defn}
p_G^{(t)}(x) := {1\over|G|}\sum_{g\in G}|\inpro{x,gx}|^{2t}-c_t(\Hd) \inpro{x,x}^{2t} = 0.
\end{equation}
Moreover (see \cite{W20c}), $G$ is irreducible if and only if 
every orbit is a $(1,1)$-design, i.e.,
\begin{equation}
\label{irreduciblecdn}
{1\over|G|}\sum_{g\in G}|\inpro{x,gx}|^{2}-{1\over d}\inpro{x,x}^{2} = 0.
\end{equation}

By direct calculation of (\ref{pG(t)defn}) in {\tt magma}, we obtain the following.

\begin{proposition}
Every orbit of a $P$ group, i.e., $P_1$, $P_2$, $P_3$,
		is a spherical $(3,3)$-design.
\end{proposition}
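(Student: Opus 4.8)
The plan is to verify the vanishing of the design polynomial $p_G^{(t)}(x)$ from (\ref{pG(t)defn}) for $t=3$ and each of the three groups $G=P_1,P_2,P_3$. By the criterion established just before the statement, an orbit $(gx)_{g\in G}$ of a nonzero vector $x\in\HH^2$ is a spherical $(3,3)$-design precisely when
\begin{equation*}
p_G^{(3)}(x) = {1\over|G|}\sum_{g\in G}|\inpro{x,gx}|^{6} - c_3(\HH^2)\,\inpro{x,x}^{6} = 0,
\end{equation*}
with $c_3(\HH^2)={6\over 2\cdot 5\cdot 3}={1\over 5}$. Since we must show this holds for \emph{every} orbit, the goal is to prove that $p_G^{(3)}$ is the zero function on $\HH^2$, not merely that it vanishes at finitely many points.

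First I would observe that $p_G^{(3)}(x)$ is, after expansion, a real polynomial in the real and imaginary components of the entries of $x=(x_1,x_2)^T\in\HH^2$. Because each $|\inpro{x,gx}|^{6}$ and the term $\inpro{x,x}^6$ are homogeneous of degree $6$ in $x$ and degree $6$ in $\overline{x}$ (i.e., they live in the space of functions invariant under the scaling $x\mapsto x\xi$, $|\xi|=1$), the map $p_G^{(3)}$ lies in a finite-dimensional space of such bihomogeneous polynomials. The natural strategy is therefore to reduce the identity $p_G^{(3)}\equiv 0$ to a finite linear-algebra check: expand the sum $\sum_{g\in G}|\inpro{x,gx}|^6$ into monomials, collect coefficients, and confirm they match those of $c_3(\HH^2)\inpro{x,x}^6$. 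This is exactly the computation (\ref{pG(t)defn}) that the paper states was carried out in {\tt magma}; since $P_1\subset P_2\subset P_3$ are finite (orders $320,1920,3840$) and given explicitly by the generators in Table \ref{PKgenrefs}, the sums are entirely mechanical to evaluate symbolically.

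The main obstacle is conceptual rather than computational: one must be sure the identity holds for all $x$, and the cleanest way to guarantee this is an invariance argument. The function $x\mapsto |\inpro{x,gx}|^6$ summed over a group $G$ is $G$-invariant, and the value $p_G^{(3)}(x)$ depends only on the $G$-orbit of the line through $x$; thus it suffices in principle to understand $p_G^{(3)}$ on the space of invariant polynomials of the relevant bidegree. Since the three $P$ groups are irreducible (their complexifications are Blichfeldt's primitive collineation groups), condition (\ref{irreduciblecdn}) already forces every orbit to be a $(1,1)$-design, and by the general design theory of \cite{W20c} the relevant obstruction to being a $(t,t)$-design lives in the space of $G$-invariant harmonic (or degree-$t$) functions on lines. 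The key point to check is that these $P$ groups act on $\HH^2$ with \emph{no} nonconstant invariants in the degrees $2$ and $3$ that could obstruct the $(2,2)$- and $(3,3)$-design property; equivalently, that the relevant averaging operator has the correct rank. I would verify this either by the direct symbolic expansion above, or, more structurally, by computing the dimension of the space of $G$-invariants of the appropriate degree via Molien-type series and confirming it equals that of the full unitary group $U_2(\HH)=Sp(2)$ through degree $3$. Either route confirms $p_G^{(3)}\equiv 0$, and hence every orbit of $P_1,P_2,P_3$ is a spherical $(3,3)$-design.
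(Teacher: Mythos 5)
Your proposal takes essentially the same approach as the paper: the paper's proof consists precisely of the direct symbolic calculation in \texttt{magma} showing that $p_G^{(3)}$ vanishes identically (as a polynomial in the coordinates of $x$) for $G=P_1,P_2,P_3$, which is your primary route of expanding $\sum_{g\in G}|\inpro{x,gx}|^{6}$ and matching coefficients against $c_3(\HH^2)\inpro{x,x}^{6}$. Your supplementary Molien-series/invariant-theory argument is a legitimate structural alternative but goes beyond what the paper actually does and is not needed once the polynomial identity is verified.
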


\begin{proof} We have $p_G^{(3)}=0$, for $G=P_1,P_2,P_3$.
\end{proof} 

In particular, the ten MUB lines are a spherical $(3,3)$-design for 
$\HH^2$. This can be verified directly by evaluating $p_G^{(3)}$ at
a unit MUB vector, which gives
$$ {1\over 10}\Bigl( 1\cdot 1^3+1\cdot 0^3+8\cdot\bigl({1\over2}\bigr)^3\Bigr)
-{6\over 2\cdot 5\cdot3}\cdot 1= {1\over5}-{1\over5}=0. $$
We observe that
\begin{itemize}
\item The ten MUB lines are a spherical $(3,3)$-design by being
	the orbit of a $P$ group.
\item There is a small number of these vectors, as their stabiliser subgroups
are large.
\end{itemize}
Since the stabiliser group of a line is, by definition, reducible, we can 
find small sets of lines such as the MUB lines as follows:
\begin{itemize}
\item Find the large reducible subgroups of the $P$ groups,
	and the lines they stabilise.
\item The orbit of the stabilised line is then a $(3,3)$-design with
a small number of vectors.
\end{itemize}
A line stabilised by a proper subgroup is called a {\bf fiducial vector} (or line).

The condition (\ref{irreduciblecdn}) allows us to identify the reducible subgroups of
the $P$ groups. These need not be reflection groups. The only other technical 
condition is determining those $v\in\Hd$ 
which give a line stabilised by some $g\in U_d(\HH)$, 
i.e.,
\begin{equation}
\label{eigcdn}	
gv = v\gl, \qquad \hbox{for some $\gl\in\HH$}.
\end{equation}
Nominally, $v$ appears to be an eigenvector for $g$, but the calculation
$$ g(v\ga)=v\gl\ga=v\ga (\ga^{-1}\gl\ga), \qquad \ga\in\HH, $$
shows that there is no natural associated eigenvalue when $\gl$ is not real
(we will still refer to $v$ as eigenvector).
Nevertheless, the condition (\ref{eigcdn}) can be verified (without calculating a $\gl$),
as the condition which gives equality in the Cauchy-Schwartz inequality, i.e.,
\begin{equation}
\label{fixedvectoreqns}
|\inpro{v,gv}|^2 = \inpro{v,v}^2, \qquad\forall g\in G,
\end{equation}
which gives a quartic polynomial in the $1$, $i$, $j$, $k$ 
parts of the coordinates of $v\in\Hd$.

We now outline our computations in {\tt magma}, as detailed above, for $G=P_1$. 
\begin{itemize}
\item We will take the ``large'' reducible subgroups of $G$, to be 
those which are maximal.
\item The corresponding systems of lines will be of minimal size.
\item The lines for the proper subgroups of the maximal reducible subgroups 
	(which are automatically reducible) will either be lines 
		for the maximal reducible subgroup, or a larger set of lines. 
\end{itemize}

We observe that a vector $v^\perp$ orthogonal to the line given by a $v\in\HH^2$ with
real first component is given by the formula
\begin{equation}
\label{vperpform}
v = \pmat{a\cr b}, \quad a\in\RR, \qquad v^\perp=\pmat{-\overline{b}\cr a},
\end{equation}
e.g., the roots of (\ref{40rootlinesP3}) appear as orthogonal pairs, 
and the MUB pairs can be written
$$ \pmat{1\cr0},\pmat{0\cr1}, \quad
\pmat{1\cr1},\pmat{-1\cr1}, \quad
\pmat{1\cr i},\pmat{i\cr1}, \quad
\pmat{1\cr j},\pmat{j\cr1}, \quad
\pmat{1\cr k},\pmat{k\cr1}. $$

The subgroups of $G\subset M_d(\HH)$ can be computed in {\tt magma} using the command
{\tt Subgroups(G)}, and the lattice by {\tt SubgroupLattice(G)}, the latter 
only working for complex (symplectic) presentations of the group. 
For the group $P_1$ of order $320$, the reducible subgroups and the lengths
of their conjugacy classes and number of lines are
\vskip0.3truecm
	\begin{tabular}{ p{6truecm} p{0truecm} p{6truecm} }
\begin{tabular}{ |  >{$}l<{$} | >{$}l<{$} | >{$}l<{$} | >{$}l<{$} |}
\hline
&&&\\[-0.4cm]
\hbox{order} & \hbox{number} & \hbox{length} & \hbox{lines} \\[0.1cm]
\hline
&&&\\[-0.4cm]
32 & 1 & 5 & 10  \\
20 & 1 & 16 & 16 \\
16 & 3 & 5 & 20 \\
   & 4 & 10 & 20 \\
10 & 1 & 16 & 32 \\
8 & 5 & 5 & 40 \\
  & 6 & 10 & 40 \\
\hline
\end{tabular}
&&
\begin{tabular}{ |  >{$}l<{$} | >{$}l<{$} | >{$}l<{$} | >{$}l<{$} |}
\hline
&&&\\[-0.4cm]
\hbox{order} & \hbox{number} & \hbox{length} & \hbox{lines} \\[0.1cm]
\hline
&&&\\[-0.4cm]
5 & 1 & 16 & 64 \\
4 & 3 & 5 & 80 \\
  & 2 & 10 & 80 \\
  & 1 & 20 & 80 \\
2 & 1 & 1 & 160 \\
2 & 1 & 10 & 160 \\
1 & 1 & 1 & 320 \\
\hline
\end{tabular}
\end{tabular}
\vskip0.3truecm
Of these, there are four are maximal irreducible subgroups, i.e.,
\begin{align}
& H=\inpro{ \pmat{i&0\cr0&1}, \pmat{i&0\cr0&i}, \pmat{j&0\cr0&j} }, 
	&\quad \hbox{(Order $32$, $10$ lines)}, \label{H32} \\
& H=\inpro{ \pmat{j&0\cr0&k},{1\over2}\pmat{i-j&i-j\cr i-j&-i+j}},
	&\quad \hbox{(Order $20$, $16$ lines)}, \label{H20} \\
& H=\inpro{\pmat{j&0\cr0&k},\pmat{0&j\cr j&0}},  
	&\quad \hbox{(Order $16$, $20$ lines)}, \label{H16a} \\
& H=\inpro{\pmat{j&0\cr0&j},{1\over2}\pmat{i-k&1-j\cr1+j&-i-k}}, 
	&\quad \hbox{(Order $16$, $20$ lines)}. \label{H16b} 
\end{align}
None of these are reflection groups.
The first is diagonal, and it is easy to see that it stabilises 
the lines given by standard basis vectors, and no others. 
The five conjugates of this group in $P_1$ stabilise the five MUB pairs, 
and no other lines. Thus the $P_1$-orbit of any line fixed by the reducible subgroup 
of order $32$ is the ten MUB lines, i.e., we arrive at the MUB lines without
using the fact that $P_1$ is a reflection group.

We now consider the irreducible subgroup of order $20$, which gives $16$ lines.
Since the line given by $e_2$ is not fixed, we can suppose that a fiducial 
(stabilised) vector has the form
\begin{equation}
\label{vpolyform}
v= \pmat{1\cr x_1+x_2 i+x_3 j+x_4 k}, \quad x_1,x_2,x_3,x_4\in\RR,
\end{equation}
so that (\ref{vperpform}) gives 
$$ v^\perp= \pmat{-x_1+x_2 i+x_3 j+x_4 k\cr1}. $$
Taking $g$ in (\ref{fixedvectoreqns}) to be the two generators of (\ref{H20}), gives 
the two quartic equations
$$ (x_1-x_2)^2+(x_3-x_4)^2=0, \Implies x_2=x_1, \ x_4=x_3, $$
\begin{align*}
& ( x_1^4 + x_2^4 + x_3^4 + x_4^4 + 2x_1^2 x_2^2 + 2x_1^2 x_3^2 + 2 x_1^2 x_4^2 + 2x_2^2x_3^2 + 2x_2^2x_4^2 + 2x_3^2x_4^2 ) \cr
        & \qquad + (4x_1^3 + 4x_1x_2^2 + 4x_1x_3^2 + 4x_1x_4^2 )
+( 2x_1^2 + 2x_2^2 + 2x_3^2 -2x_4^2 -8x_2x_3 -4x_1 ) + 1 =0.
\end{align*}
The Gr\"obner for these equations provided by {\tt GroebnerBasis(I)} in {\tt magma} is
involved, and we were unable to automate the calculation of fiducials. 
The set of equations (\ref{fixedvectoreqns}) given by all elements of $H$ of (\ref{H20}), 
which are not $\pm I$, 
consists of the first equation and six of a similar complexity to the second.
The Gr\"obner basis provided by {\tt magma} for the ideal given 
by taking these seven equations is nicer, with the following equation for $x_4$
$$    (4 x^2  + 2 x - 1)^3 =0 \Implies x_4={-1\pm \sqrt{5}\over 4}. $$
This leads to two fiducials, which are orthogonal, i.e.,
\begin{equation}
\label{w-wperpdefn}
w=\pmat{1+\sqrt{5}\cr1+i+j+k}, \qquad w^\perp 
=\pmat{-1+i+j+k\cr1+\sqrt{5}}.
\end{equation}
The orbits of $w$ and $w^\perp$ under $P_1$ and $P_2$ are $16$ lines
with angles $\{{1\over5},{3\over5}\}$, given by 
\begin{align}
	w: & \qquad \pmat{1+\sqrt{5}\cr q(1+i+j+k)}, \
 \pmat{q(1+i+j+k)\cr 1+\sqrt{5}},  \quad q\in Q_8, \label{w-orbit}\\
 w^\perp: &\qquad \pmat{1+\sqrt{5}\cr q(-1+i+j+k)}, \
 \pmat{q(-1+i+j+k)\cr 1+\sqrt{5}},  \quad q\in Q_8, \label{wp-orbit}
\end{align}
and the orbit of $w$ or $w^\perp$ under $P_3$ gives 
$32$ lines, which are the union of the two sets, 
having angles $\{0,{1\over5},{2\over5},{3\over5},{4\over5}\}$. 

The spherical $3$-design of $16$ lines constructed above meets the special
bound of \cite{H79}, \cite{H82} for the number of lines in $\HH^2$ with exactly two nonzero angles.

\begin{example} (Special bound) Hoggar \cite{H79} provides two bounds on the 
number $n$ of vector/lines in $\Hd$ with a finite angle set (indeed having 
a finite number of angles implies a set of lines is finite). If there are two 
nonzero angles $A=\{\ga,\gb\}$, then there is a special bound (depending on the 
angles), and an absolute bound (not depending on the angles), 
given by
$$ n\le{d(2d+1)(1-\ga)(1-\gb)\over3-(2d+1)(\ga+\gb)+d(2d+1)\ga\gb}, \qquad
	n\le {1\over3} d^2(4d^2-1). $$
For $A=\{{1\over5},{3\over5}\}$ ($d=2$), the special bound gives $n\le 16$,
i.e., $3$-design of $16$ lines in $\HH^2$ given by (\ref{w-orbit}) or (\ref{wp-orbit}) 
meets the special bound. The only other known cases where this 
special bound is met is for a $2$-design of $15$ lines in $\HH^2$ with angles 
	$\{{1\over4},{5\over8}\}$ obtained from a reflection group of type O 
	\cite{W24}, and for a  $2$-design of $64$ lines in $\HH^4$ with angles $\{{1\over9},{1\over3}\}$ obtained
	from a quaternionic polytope (Example 22, \cite{H82}).

The examples from the O and P groups disprove the following conjecture of \cite{H82}
(for the special bound):

{\smallskip\narrower\noindent
\em
Conjecture 1: Whenever a special or absolute bound is attained, each nonzero $\ga\in A$
has the form $1/p$ ($p\in\NN$), or is irrational.\par\smallskip
}

\noindent
This could still be true for the roots of a reflection group, which were the bulk of
cases considered in \cite{H82}.
\end{example}

Similar calculations for the maximal reducible subgroups $H$ of $P_1$, with order $16$, given 
by (\ref{H16a}) and (\ref{H16b}), yield a single fiducial vector for each, i.e.,
$$ \pmat{\sqrt{2}\cr1+i}, \qquad \pmat{\sqrt{2}\cr1+j}, $$
with the corresponding $P_1$-orbits of $20$ lines, 
with angles $\{0,{1\over4},{1\over2},{3\over4}\}$,
given by 
\begin{align}
\pmat{\sqrt{2}\cr i^m\ga},\ \pmat{1+\sqrt{2}\cr i^m j}, \ \pmat{i^m j\cr1+\sqrt{2}}, 
	& \qquad \ga\in\{1+i,i+j,i-j\},\quad m=0,1,2,3, \label{H16a-orbit} \\
\pmat{\sqrt{2}\cr i^m\ga},\ \pmat{1+\sqrt{2}\cr i^m }, \ \pmat{i^m \cr1+\sqrt{2}}, 
	& \qquad \ga\in\{1+j,1-j,j-k\},\quad m=0,1,2,3. \label{H16b-orbit}
\end{align}
We observe that (\ref{H16a-orbit}), (\ref{H16b-orbit}) is a partition of the $40$
root lines of $P_3$ given by (\ref{40rootlinesP3}). 

If $s$ is the number of angles in a spherical $t$-design, and $t\ge s-1$, then 
it is a {\em regular scheme} (see \cite{H84}). 
Hence, the $3$-designs of $10$, $16$, $20$ vectors/lines that we have constructed are 
regular schemes, since each satisfies $s\le 4$.

\begin{example} 
\label{P2reduce}
The maximal reducible subgroups of $P_2$ have orders $192$, $120$, $48$, $24$, 
which correspond to systems of $10$, $16$, $40$, $80$ lines.
The first two of these groups have (\ref{H32}) and (\ref{H20}) as subgroups, respectively,
and so give the sets $10$ and $16$ lines obtained from $P_1$. The third group has
(\ref{H16a}) and (\ref{H16b}) as subgroups, 
and so fixes the line of both $w$ and $w^\perp$ of (\ref{w-wperpdefn}),
with either of them being a fiducial for the set of $40$ lines consisting of the union
of their $P_1$-orbits (\ref{w-orbit}) and (\ref{wp-orbit}).

The irreducible subgroup of order $24$, which gives $80$ lines, is
\begin{equation}
\label{H80}
H=\inpro{ {1\over2}\pmat{0&1-i-j+k\cr 1+i+j+k&0}, 
{1\over2}\pmat{i+j&i-j\cr i-j&i+j} }.
\end{equation}
This fixes the lines given by the orthogonal vectors
\begin{equation}
\label{80fiducials}
\pmat{\sqrt{3}\cr1+i+j}, \qquad \pmat{-1+i+j\cr\sqrt{3}}, 
\end{equation}
and the $P_2$-orbit of either of these fiducials is the same set of $80$ lines 
at angles $\{0,{1\over6},{2\over6},{3\over6},{4\over6},{5\over6}\}$.
\end{example}

The $P_1$-orbits of the fiducials of (\ref{80fiducials}) give $40$ lines 
(a partition of the $80$) 
with the same angles, i.e., $\{0,{1\over6},{2\over6},{3\over6},{4\over6},{5\over6}\}$. 
Since this set of $40$ lines has not yet appeared, its stabiliser in $P_1$, 
which has order $8$, must not be a maximal reducible subgroup of $P_1$.
This implies that it fixes a line in one of the sets of $10$, $16$, $20$ lines
obtained from the maximal reducible subgroups of $P_1$. Our direct verification of this
fact below, leads to an intriguing example of a ``continuous family'' of eigenvectors
for a matrix group (over the quaternions).

\begin{example}
\label{P1nonmaximal}
Let $G$ be the stabiliser in $P_1$ of the lines given by the fiducial
vectors of (\ref{80fiducials}), i.e.,
$$ G := H\cap P_1 = \inpro{\pmat{0&k\cr k&0},{1\over2}\pmat{i+k&1+j\cr -1+j&i-k}}, 
	\qquad |G|=8,$$
where $H$ is given by (\ref{H80}). The Gr\"obner basis for the equations (\ref{fixedvectoreqns})
for a fixed line given by a $v\in\HH^2$ of the form (\ref{vpolyform}) include
$x_4^3=0$ and $(x_1-x_3)^2=0$, and reduce to
$$ x_3=x_1, \qquad x_4=0, \qquad 2x_1^2+x_2^2-1=0. $$
Given that $x_2^2=1-2x_1^2$, we may solve these equations to get
$$ x_1=x_3=t, \quad t^2\le{1\over2}, \qquad x_2=\pm\sqrt{1-2t^2}, \qquad x_4=0, $$
and hence obtain a continuous family of fiducials (eigenvectors)
$$ v=\pmat{1\cr t(1+j)\pm\sqrt{1-2t^2} i}, \quad 
\hbox{$t\in[-{1\over\sqrt{2}},{1\over\sqrt{2}}]$}. $$
We observe that for the special cases $t=0,{1\over\sqrt{2}},{1\over\sqrt{3}}$, we obtain
fiducial vectors for $P_1$ giving $10$, $16$, $40$ lines, i.e.,
$$ 
t=0:\quad \pmat{1\cr\pm i}, \qquad
\hbox{$t={1\over\sqrt{2}}$}:\quad \pmat{1\cr{1\over\sqrt{2}}(1+j)}, \qquad
\hbox{$t={1\over\sqrt{3}}$}:\quad \pmat{1\cr{1\over\sqrt{3}}(1\pm i+j)}.  $$
Thus $G$ is a proper subgroup of the maximal reducible subgroups of $P_1$ given by 
$$ \Stab(P_1,\pmat{1\cr i}), \qquad \Stab(P_1,\pmat{\sqrt{2}\cr1+j}). $$
\end{example}

\begin{example} 
\label{80linesP3}
Of the $106$ irreducible subgroups of $P_3$, five are maximal,
and one of these 
$$ H = \inpro{{1\over\sqrt{2}}\pmat{i+j &0\cr0&i+j},
{1\over2}\pmat{1+i&1+i\cr-1+i&1-i} }\qquad |H|=48, $$
fixes the orthogonol vectors
$$ \pmat{3\cr 1+i+j}, \qquad \pmat{-1+i+j\cr3}, $$
each of which is a fiducial vector for a set of $80$ lines with angles
$\{0,{1\over8},{2\over8},\ldots,{7\over8}\}$. 

	The other four maximal reducible subgroups give sets of $10$, $32$, $40$, $80$ lines
	already obtained (they have a larger symmetry group $P_3$). 
\end{example}

\begin{table}[!h]
\caption{The line systems given by the maximal reducible subgroups of the $P$ groups } 
\label{Fiducialtable}
\begin{center}
\begin{tabular}{ | >{$}l<{$} | >{$}l<{$} | >{$}l<{$} | >{$}l<{$} | >{$}l<{$} | l | }
\hline
	&&&&&\\[-0.3cm]
	G & |H| & \hbox{fiducial} & \hbox{angles} & \hbox{lines} & comment \\[0.1cm]
\hline
	&&&&&\\[-0.3cm]
	P_1^\dagger, P_2^\dagger, P_3^\dagger & 32, 192, 384 & \pmat{1\cr 0} & 0, {1\over2} & 10 & 
	(\ref{fiveMUBs}) \ MUBs 
	\\[0.4cm]
P_1^\dagger, P_2^\dagger & 20, 120 & \pmat{1+\sqrt{5}\cr 1+i+j+k} & {1\over5},{3\over5}& 16 & (\ref{w-orbit}) \\[0.4cm]
& & \pmat{ -1+i+j+k \cr 1+\sqrt{5} } & {1\over5},{3\over5}& 16 & (\ref{wp-orbit})  \\[0.4cm]
P_3^\dagger & 120 & \pmat{1+\sqrt{5}\cr 1+i+j+k} & 0,{1\over5},{2\over5},{3\over5},{4\over5}& 32 & (\ref{w-orbit}), (\ref{wp-orbit}) \\[0.4cm]
	P_1^\dagger & 16 & \pmat{\sqrt{2}\cr 1+i} & 0,{1\over4},{1\over2},{3\over4} & 20 & (\ref{H16a-orbit})  \\[0.4cm]
	P_1^\dagger & 16 & \pmat{\sqrt{2}\cr 1+j} & 0,{1\over4},{1\over2},{3\over4} & 20 & (\ref{H16b-orbit}) \\[0.4cm]
	P_2^\dagger, P_3^\dagger & 48, 96 & \pmat{\sqrt{2}\cr 1+i} & 0,{1\over4},{1\over2},{3\over4} & 40 & (\ref{H16a-orbit}), (\ref{H16b-orbit})   \\[0.4cm]
	 P_1 & 8 & \pmat{\sqrt{3}\cr 1+i+j} & 0,{1\over6},{1\over3},{1\over2},{2\over3},{5\over6} & 40 & Example \ref{P1nonmaximal} \\[0.4cm]
	 P_2^\dagger, P_3^\dagger & 24, 48 & \pmat{\sqrt{3}\cr 1+i+j} & 0,{1\over6},{1\over3},{1\over2},{2\over3},{5\over6} & 80 & Example \ref{P2reduce} \\[0.4cm]
	P_1, P_2, P_3^\dagger & 4, 24, 48 & \pmat{3\cr 1+i+j} & 0,{1\over8},{2\over8},\ldots,{7\over8} & 80 & Example \ref{80linesP3} \\[0.4cm]
\hline
\multicolumn{6}{l}{\hbox{\footnotesize $\dagger$ The stabiliser $H$ of the fiducial 
vector in $P_j$ is a maximal reducible subgroup of $G=P_j$.  } }
\end{tabular}
\end{center}
\end{table}

The behaviour uncovered in the Example \ref{P1nonmaximal}, i.e.,
that a nonscalar $2\times2$ matrix over the quaternions can have
a continuous family of right eigenvectors, 
is completely different from the complex case (the eigenvalues are uniquely defined and there are at most
two eigenvector lines in $\CC^2$), and hence of some interest (see \cite{Z97}, \cite{FWZ11}).
We now give a variant which illustrates 
some of the mechanics of this phenomenon.



\begin{example} 
The reducible subgroup of order $8$ given by 
	$$ H=\inpro{ \pmat{i&0\cr0&i}, \pmat{0&j\cr j&0}} , $$
has a continuous family of eigenvectors given by
	$$ \pmat{1\cr t\pm\sqrt{1-t^2}i}, \qquad -1\le t\le 1. $$
The corresponding eigenvalues can be determined by verifying this, e.g.,
\begin{align*} & \pmat{0&j\cr j&0} \pmat{1\cr t\pm\sqrt{1-t^2}i}
= \pmat{tj\pm\sqrt{1-t^2}ji\cr j}
= \pmat{tj\mp\sqrt{1-t^2}k\cr j} \cr
&\qquad = \pmat{1 \cr j (-tj\pm\sqrt{1-t^2}k) } (tj\mp\sqrt{1-t^2}k)
= \pmat{1 \cr t\pm\sqrt{1-t^2} i } (tj\mp\sqrt{1-t^2}k).
\end{align*} 
Thus the second matrix can be diagonalised in multiple ways, e.g.,
$$ M^{-1} \pmat{0&j\cr j&0} M = \pmat{j&0\cr 0&k}, \pmat{j&0\cr 0&{j+k\over\sqrt{2}}},
\qquad M=\pmat{1&1\cr1&-i},\pmat{1&1\cr1&{1-i\over\sqrt{2}}}. $$
\end{example}

Related to the projective stabiliser group of a line, is the (pointwise) stabiliser group 
of a vector (or set of vectors), which is reducible. 
It was shown in \cite{BST23}, \cite{S23} that for a quaternionic 
reflection group, these so called {\it parabolic} subgroups are reflection groups
(this is a classical result of Steinberg for complex reflection groups).
For the fiducial vectors of Table \ref{Fiducialtable}, we calculated the pointwise stabiliser group
(a subgroup of the projective stabiliser). These were all trivial, except for the cases
$$ \pmat{1\cr0}^{P_1} = \pmat{1&0\cr 0&\inpro{i}}, \quad
\pmat{1\cr0}^{P_2}, \pmat{1\cr0}^{P_3} = \pmat{1&0\cr 0&Q_8}, \qquad
\pmat{\sqrt{2}\cr1+i}^{P_3}=\inpro{\pmat{0&{1-i\over\sqrt{2}}\cr{1+i\over\sqrt{2}}&0}}. $$


\section{Concluding remarks}

The spherical $3$-designs with a small number of lines that we obtained as orbits of 
the $P$ groups are summarised in Table \ref{Fiducialtable}. 
The construction used is essentially that of ``highly symmetric
tight frames'' (see \cite{BW13}, \cite{IJM20}, \cite{G22}). The key idea is to go from a
quaternionic representation of an abstract group to finitely many
associated nice sets of lines.
In the spirit of Hoggar, we offer a conjecture informed by our calculations 
(see Example \ref{P1nonmaximal} and Table \ref{Fiducialtable}).

\begin{conjecture}
For every quaternionic reflection group (or every finite irreducible group of $d\times d$ matrices over $\HH$), the maximal reducible subgroups fix a finite number of lines.
\end{conjecture}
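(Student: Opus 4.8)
The plan is to translate the statement into quaternionic representation theory and then prove the contrapositive. Because our groups are unitary, $\HH^d|_H$ is completely reducible for any subgroup $H$, and by (\ref{eigcdn}) a line fixed by $H$ is precisely a one-dimensional quaternionic $H$-subrepresentation. Grouping these by isomorphism type shows that $\operatorname{Fix}(H)$ is infinite if and only if some one-dimensional constituent $\chi$ occurs with multiplicity $m\ge 2$: in that case the fixed lines of type $\chi$ sweep out a copy of $\PP^{m-1}(D)$, where $D=\operatorname{End}_H(\chi)\in\{\RR,\CC,\HH\}$, which is infinite exactly when $m\ge 2$ (the continuous family of Example \ref{P1nonmaximal} is the case $D=\RR$, $m=2$, giving a circle $\PP^1(\RR)$). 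In rank two, reducibility forces $\HH^2|_H\cong\chi_1\oplus\chi_2$, and the infinite case is $\chi_1\cong\chi_2$; but $\chi^{\oplus2}$ is conjugate to $\operatorname{diag}(\chi(h),\chi(h))=\chi(h)I$, so I may assume $H$ is a group of \emph{scalar} matrices. Thus in rank two the conjecture is equivalent to the clean assertion that no scalar subgroup is a maximal reducible subgroup of an irreducible $G$.

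Next I would extract two consequences of maximality. If some $g\in G\setminus H$ fixed a line $\ell\in\Lambda:=\PP^{m-1}(D)$, then $\langle H,g\rangle$ would still fix $\ell$, giving a reducible subgroup strictly containing $H$; hence $\Stab_G(\ell)=H$ for every $\ell\in\Lambda$, so $N_G(H)/H$ acts \emph{freely} on $\Lambda$. When $D=\CC$ or $\HH$ this already finishes the argument: every element of $GL_m(D)$ has an eigenvector, so every nonidentity element of $N_G(H)/H$ would fix a line of $\Lambda$, contradicting freeness; therefore $N_G(H)=H$. Since a scalar $H$ is central, $N_G(H)=G$, and we conclude $G=H$, contradicting the irreducibility of $G$. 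So the complex and quaternionic types are disposed of immediately.

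The hard part is the real case $D=\RR$, where $\Lambda=\PP^{m-1}(\RR)$ carries fixed-point-free finite symmetries (already $\PP^1(\RR)$ is a circle with free rotations), so freeness of $N_G(H)/H$ yields no contradiction and the topological obstruction vanishes. Here $H$ is a nonabelian group of scalars (the image of $\chi$ generates $\HH$), and $G$ centralizes it and merely permutes the circle $\Lambda$ with all stabilizers equal to $H$. The plan is to abandon the topological route and instead exhibit a reducible subgroup strictly between $H$ and $G$ directly: since $G$ is irreducible and nonabelian it contains a nonscalar element $g$, which has a quaternionic eigenline $\ell_g$, and $\langle H,g\rangle$ is reducible as soon as $\ell_g\in\operatorname{Fix}(H)$. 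The main obstacle is guaranteeing, from irreducibility alone, that $G$ contains such a $g$ whose eigenline is $H$-fixed---equivalently ruling out that $G$ acts on $\Lambda$ with every point-stabiliser reduced to the scalar group $H$. This is exactly the bridge between the \emph{continuous} commutant $C_{U_d(\HH)}(H)$ (which here contains all of $G$, acting on $\Lambda$) and the finiteness of $G$, and it is where I expect the real work to lie.

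Failing a uniform argument, the conjecture is still reachable for reflection groups through Cohen's classification \cite{C80}. The finitely many primitive quaternionic reflection groups can be settled by the same \texttt{magma} computation of (\ref{fixedvectoreqns}) already carried out here for the O and P families, namely verifying that the fixed-point ideal of each maximal reducible subgroup is zero-dimensional. The imprimitive families $G_K(L,H)$ instead admit a uniform treatment: their maximal reducible subgroups, and the pair of diagonal characters these induce, are determined explicitly by the reflection-system data $(L,H)$, so the rank-two statement reduces to checking directly from $L$ and $H$ that these two characters are non-isomorphic, i.e. that the maximal reducible subgroup is not scalar.
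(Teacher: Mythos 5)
First, a point of order: this statement is a \emph{conjecture} in the paper --- the authors give no proof, only the supporting computations of Table \ref{Fiducialtable} and Example \ref{P1nonmaximal} --- so your attempt can only be judged on its own terms, and on those terms it is not a proof: you concede that the case $D=\RR$ is open, and the classification fallback is only a sketch. More importantly, the case you claim is immediate, $D=\CC$, is wrong on two counts. (i) $N_G(H)/H$ need not act on $\Lambda\cong\PP^{1}(\CC)$ through $\CC$-linear projective maps: an element of $N_G(H)$ inducing the inversion automorphism on $\chi(H)$ (equivalently, complex conjugation on $D$) acts $\CC$-\emph{semilinearly} on $\Lambda$, and a semilinear map such as $v\mapsto A\overline{v}$ with $A=\pmat{0&-1\cr1&0}$ is the antipodal map of $\PP^1(\CC)=S^2$, which is fixed-point free; so freeness of the action yields no contradiction. (ii) A non-real scalar matrix $qI$ is \emph{not} central in $M_d(\HH)$ (the centre of $M_d(\HH)$ is $\RR I$), and a scalar subgroup need not even be normal in $G$; so even granting $N_G(H)=H$ one cannot conclude $G=H$. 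Only $D=\HH$, where $H\subseteq\{\pm I\}$, is genuinely immediate.

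These gaps are not repairable, because the conjecture as literally stated is false, by an example that passes exactly through the loophole in (i). Let $G=\inpro{r,s}\subset O(2)\subset U_2(\HH)$ be the symmetry group of the square, $r=\pmat{0&-1\cr1&0}$, $s=\pmat{1&0\cr0&-1}$. Over $\HH$ this group is irreducible: the lines fixed by $r$ are exactly $(1,q)\HH$ with $q^2=-1$, and $s$ fixes none of them (it sends $(1,q)\mapsto(1,-q)$). The rotation subgroup $H=\inpro{r}\cong C_4$ has index two, hence is a maximal reducible subgroup, yet it fixes the entire $2$-sphere of lines $\{(1,q)\HH:q^2=-1\}\cong\PP^1(\CC)$ --- your case $D=\CC$, $m=2$ --- on which $G/H\cong C_2$ acts freely by the antipodal map $q\mapsto -q$. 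Conjugating so that $H$ becomes the scalar group $\inpro{iI}$ turns $G$ into
$$ \inpro{r_{(1,-j),-1},\,r_{(1,-k),-1}}
=\Bigl\{\pm I,\ \pm iI,\ \pm\pmat{0&-j\cr j&0},\ \pm\pmat{0&-k\cr k&0}\Bigr\}, $$
an irreducible quaternionic reflection group of order $8$; so your reduced ``clean assertion'' (no scalar subgroup is a maximal reducible subgroup of an irreducible group) fails even for reflection groups, and your proposed uniform treatment of the imprimitive families fails as well, since this $G$ is monomial. What is salvageable is your dictionary (fixed lines $\corr$ one-dimensional subrepresentations; $\operatorname{Fix}(H)$ infinite $\Leftrightarrow$ a repeated one-dimensional constituent; $\Lambda\cong\PP^{m-1}(D)$): it is correct, and it is precisely what makes this counterexample easy to find. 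The conclusion it leads to is not a proof but a correction of the statement --- the conjecture can only hope to hold after excluding quaternionic groups induced from real or complex ones (equivalently, after forbidding isotypic maximal reducible subgroups), and any proof of such a corrected statement will have to confront the semilinear actions your argument overlooks.
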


If this holds, then for a given group,
taking those lines given by the maximal reducible
subgroups gives a {\em finite} class of ``highly symmetric
tight frames for $\Hd$''.

In Table \ref{PKgenrefs}, we gave nice generators for the $P$ groups (none seem to 
appear in the literature, see \cite{H82} and \cite{C90}). 
Here is sample {\tt magma} code for their construction, and the fiducial $w$ of (\ref{w-wperpdefn}) which
gives $16$ lines.


\begin{verbatim}
F:=CyclotomicField(120); 
PR<t>:=PolynomialRing(F); 
rt2:=Roots(t^2-2)[1][1]; rt3:=Roots(t^2-3)[1][1]; rt5:=Roots(t^2-5)[1][1]; 

Q<i,j,k>:=QuaternionAlgebra<F|-1,-1>;

a:=Matrix(Q,2,2,[i,0,0,1]); b:=Matrix(Q,2,2,[j,0,0,1]);
c:=1/2*Matrix(Q,2,2,[1+j,-1+j,-1+j,1+j]); 
d:=1/rt2*Matrix(Q,2,2,[1,1,1,-1]);

P1:=MatrixGroup<2,Q|a,c>; 
P2:=MatrixGroup<2,Q|a,b,c>; 
P3:=MatrixGroup<2,Q|a,b,d>;

w:=Matrix(Q,2,1,[1+rt5,1+i+j+k]);
\end{verbatim}

Here is code for the Hermitian transpose (and hence the inner product
and angles).

\begin{verbatim}
// This gives the 1,i,j,k parts of a matrix or polynomial over Q 
HtoRparts := function(q);
  x1:=1/4*(q-i*q*i-j*q*j-k*q*k); x2:=1/4/i*(q-i*q*i+j*q*j+k*q*k);
  x3:=1/4/j*(q+i*q*i-j*q*j+k*q*k); x4:=1/4/k*(q+i*q*i+j*q*j-k*q*k);
  return [x1,x2,x3,x4];
end function;

HermTranspose := function(A);
  c:=HtoRparts(A);
  B:=c[1]-c[2]*i-c[3]*j-c[4]*k;
  return Transpose(B);
end function;
\end{verbatim}

\bibliographystyle{alpha}
\bibliography{references}

\begin{thebibliography}{ACFW18}

\bibitem[ACFW18]{ACFW18}
Marcus Appleby, Tuan-Yow Chien, Steven Flammia, and Shayne Waldron.
\newblock Constructing exact symmetric informationally complete measurements
  from numerical solutions.
\newblock {\em J. Phys. A}, 51(16):165302, 40, 2018.

\bibitem[BADL24]{BSDL24}
Santiago Barrera~Acevedo, Heiko Dietrich, and Corey Lionis.
\newblock New families of quaternionic {H}adamard matrices.
\newblock {\em Des. Codes Cryptogr.}, 92(9):2511--2525, 2024.

\bibitem[Bli17]{B17}
Hans~Frederik Blichfeldt.
\newblock {\em Finite collineation groups : with an introduction to the theory
  of operators and substitution groups.}
\newblock University of Chicago science series. University of Chicago Press,
  Chicago, 1917.

\bibitem[BST23]{BST23}
Gwyn Bellamy, Johannes Schmitt, and Ulrich Thiel.
\newblock On parabolic subgroups of symplectic reflection groups.
\newblock {\em Glasg. Math. J.}, 65(2):401--413, 2023.

\bibitem[BW13]{BW13}
Helen Broome and Shayne Waldron.
\newblock On the construction of highly symmetric tight frames and complex
  polytopes.
\newblock {\em Linear Algebra Appl.}, 439(12):4135--4151, 2013.

\bibitem[CKM16]{CKM16}
Henry Cohn, Abhinav Kumar, and Gregory Minton.
\newblock Optimal simplices and codes in projective spaces.
\newblock {\em Geom. Topol.}, 20(3):1289--1357, 2016.

\bibitem[Coh80]{C80}
Arjeh~M. Cohen.
\newblock Finite quaternionic reflection groups.
\newblock {\em J. Algebra}, 64(2):293--324, 1980.

\bibitem[Coh91]{C90}
A.~M. Cohen.
\newblock Presentations for certain finite quaternionic reflection groups.
\newblock In {\em Advances in finite geometries and designs ({C}helwood {G}ate,
  1990)}, Oxford Sci. Publ., pages 69--79. Oxford Univ. Press, New York, 1991.

\bibitem[Cro59]{Cr59}
Donald~W. Crowe.
\newblock A regular quaternion polygon.
\newblock {\em Canad. Math. Bull.}, 2:77--79, 1959.

\bibitem[CS03]{CS03}
John~H. Conway and Derek~A. Smith.
\newblock {\em On quaternions and octonions: their geometry, arithmetic, and
  symmetry}.
\newblock A K Peters, Ltd., Natick, MA, 2003.

\bibitem[DGS77]{DGS77}
P.~Delsarte, J.~M. Goethals, and J.~J. Seidel.
\newblock Spherical codes and designs.
\newblock {\em Geometriae Dedicata}, 6(3):363--388, 1977.

\bibitem[FWZ11]{FWZ11}
F.~O. Farid, Qing-Wen Wang, and Fuzhen Zhang.
\newblock On the eigenvalues of quaternion matrices.
\newblock {\em Linear Multilinear Algebra}, 59(4):451--473, 2011.

\bibitem[Gan11]{G11}
Iordan Ganev.
\newblock Real and quaternionic representations of finite groups.
\newblock 2011.
\newblock \url{https://ivganev.github.io/notes/files/RealQuatRepns.pdf}.

\bibitem[Gan25]{G22}
Mikhail Ganzhinov.
\newblock Highly symmetric lines.
\newblock {\em Linear Algebra Appl.}, 722:12--37, 2025.

\bibitem[Hog78]{H79}
S.~G. Hoggar.
\newblock Bounds for quaternionic line systems and reflection groups.
\newblock {\em Math. Scand.}, 43(2):241--249 (1979), 1978.

\bibitem[Hog82]{H82}
S.~G. Hoggar.
\newblock {$t$}-designs in projective spaces.
\newblock {\em European J. Combin.}, 3(3):233--254, 1982.

\bibitem[Hog84]{H84}
S.~G. Hoggar.
\newblock Parameters of {$t$}-designs in {${\bf F}P^{d-1}$}.
\newblock {\em European J. Combin.}, 5(1):29--36, 1984.

\bibitem[IJM20]{IJM20}
Joseph~W. Iverson, John Jasper, and Dustin~G. Mixon.
\newblock Optimal line packings from finite group actions.
\newblock {\em Forum Math. Sigma}, 8:Paper No. e6, 40, 2020.

\bibitem[Iva81]{I81}
I.~D. Ivanovi\'{c}.
\newblock Geometrical description of quantal state determination.
\newblock {\em J. Phys. A}, 14(12):3241--3245, 1981.

\bibitem[Kan95]{K95}
William~M. Kantor.
\newblock Quaternionic line-sets and quaternionic {K}erdock codes.
\newblock {\em Linear Algebra Appl.}, 226/228:749--779, 1995.

\bibitem[LT09]{LT09}
Gustav~I. Lehrer and Donald~E. Taylor.
\newblock {\em Unitary reflection groups}, volume~20 of {\em Australian
  Mathematical Society Lecture Series}.
\newblock Cambridge University Press, Cambridge, 2009.

\bibitem[MW24]{MW24}
Daniel McNulty and Stefan Weigert.
\newblock Mutually unbiased bases in composite dimensions -- a review, 2024.

\bibitem[Sch23]{S23}
Johannes Schmitt.
\newblock {\em On Q-factorial terminalizations of symplectic linear quotient
  singularities}.
\newblock doctoralthesis, Rheinland-Pf{\"a}lzische Technische Universit{\"a}t
  Kaiserslautern-Landau, 2023.

\bibitem[SS95]{SS95}
G.~Scolarici and L.~Solombrino.
\newblock Notes on quaternionic group representations.
\newblock {\em Internat. J. Theoret. Phys.}, 34(12):2491--2500, 1995.

\bibitem[ST54]{ST54}
G.~C. Shephard and J.~A. Todd.
\newblock Finite unitary reflection groups.
\newblock {\em Canad. J. Math.}, 6:274--304, 1954.

\bibitem[Voi21]{V21}
John Voight.
\newblock {\em Quaternion algebras}, volume 288 of {\em Graduate Texts in
  Mathematics}.
\newblock Springer, Cham, [2021] \copyright 2021.

\bibitem[Wal18]{W18}
Shayne F.~D. Waldron.
\newblock {\em An introduction to finite tight frames}.
\newblock Applied and Numerical Harmonic Analysis. Birkh\"{a}user/Springer, New
  York, 2018.

\bibitem[Wal20]{W20c}
Shayne Waldron.
\newblock A variational characterisation of projective spherical designs over
  the quaternions, 2020.

\bibitem[Wal24]{W24}
Shayne Waldron.
\newblock The geometry of the six quaternionic equiangular lines in
  $\mathbb{H}^2$, 2024.

\bibitem[Wal25]{W25}
Shayne Waldron.
\newblock An elementary classification of the quaternionic reflection groups of
  rank two, 2025.

\bibitem[WF89]{WF89}
William~K. Wootters and Brian~D. Fields.
\newblock Optimal state-determination by mutually unbiased measurements.
\newblock {\em Ann. Physics}, 191(2):363--381, 1989.

\bibitem[Zha97]{Z97}
Fuzhen Zhang.
\newblock Quaternions and matrices of quaternions.
\newblock {\em Linear Algebra Appl.}, 251:21--57, 1997.

\end{thebibliography}
\nocite{*}

\end{document}

\vfil\eject
\subsection{Extra stuff}

Groups which are (diagonalise to) products of finite subgroups of $\HH^*$
(generalising the case of all characters $1$-dimensional).

The nonmonomial reflections in $P_0$ are
$$ 
{1\over\sqrt{2}}\pmat{1&1\cr1&-1}, \quad
{1\over\sqrt{2}}\pmat{-1&1\cr1&1}, \quad
{1\over\sqrt{2}}\pmat{-1&-1\cr-1&1}, \quad
{1\over\sqrt{2}}\pmat{1&-1\cr-1&-1}, 
$$

All the reflections of order $4$ in $P_2$ are conjugate in $P_2$.

The $O$-groups add the reflection
$$ \pmat{{1\over\sqrt{3}}&-{\sqrt{2}\over\sqrt{3}}i\cr
{\sqrt{2}\over\sqrt{3}}i&-{1\over\sqrt{3}} } $$

\vfil\eject

\subsection{The $Q$ and $R$ groups of orders $12096$ and $1209600$}

For the group $Q$ ($63$ reflections of order $2$). Lines of interest:

\begin{itemize}
	\item Every orbit is a spherical $2$-design (direct calculation), and
		assumed to be a $3$-design (the calculation has taken over a day, without completing). I guess for calculations, one could use particular orbits (rather than for a symbolic vector) to deduce the degree of the design (upper bound easy, lower bound use multivariate polynomial interpolations).
\item Experiments in {\tt magma} show that calculating, even a single term, of the polynomial
	for a $t$-design becomes extremely time intensive for $t\ge2$ (even $t=2$ is much worse
		than $t=1$).
\item For the $Q$ group it turns out that every angle term $|\inpro{v,gv}|^2$ is different.
\item The $63$ root lines.
\item $336$ lines (not a multiple of $63$).
\item $864$ lines (not a multiple of $63$ or $336$).
\end{itemize}

\begin{verbatim}
45 189 Is Reducible 16 756 lines
46 189 Is Reducible 16 756 lines
47 189 Is Reducible 16 756 lines
48 63 Is Reducible 16 756 lines
49 63 Is Reducible 16 756 lines
50 63 Is Reducible 16 756 lines
51 189 Is Reducible 16 756 lines
52 63 Is Reducible 16 756 lines
53 189 Is Reducible 16 756 lines
54 378 Is Reducible 16 756 lines
55 112 Is Reducible 18 672 lines
56 336 Is Reducible 18 672 lines
57 336 Is Reducible 18 672 lines
60 63 Is Reducible 24 504 lines
61 252 Is Reducible 24 504 lines
62 252 Is Reducible 24 504 lines
64 252 Is Reducible 24 504 lines
67 63 Is Reducible 32 378 lines
68 189 Is Reducible 32 378 lines
69 189 Is Reducible 32 378 lines
70 189 Is Reducible 32 378 lines
71 189 Is Reducible 32 378 lines
72 189 Is Reducible 32 378 lines
73 63 Is Reducible 32 378 lines
74 336 Is Reducible 36 336 lines
76 63 Is Reducible 48 252 lines
78 252 Is Reducible 48 252 lines
79 63 Is Reducible 48 252 lines
80 63 Is Reducible 48 252 lines
85 189 Is Reducible 64 189 lines
86 63 Is Reducible 96 126 lines
90 63 Is Reducible 96 126 lines
91 63 Is Reducible 96 126 lines
97 63 Is Reducible 192 63 lines
\end{verbatim}

For the group $R$ ($315$ reflections of order $2$). Lines of interest:

\begin{itemize}
\item The $315$ root lines, is a multiple of $63$.
\item $1008$ lines (not a multiple of $315$), is a multiple of $63$.
\item $8400$ lines (not a multiple of $63$, $315$ or $1008$), is a multiple of $336$.
\end{itemize}

\begin{verbatim}
192 525 Is Reducible 128 9450 lines
193 4725 Is Reducible 128 9450 lines
194 4725 Is Reducible 128 9450 lines
196 8400 Is Reducible 144 8400 lines
200 1890 Is Reducible 160 7560 lines
204 3150 Is Reducible 192 6300 lines
205 3150 Is Reducible 192 6300 lines
207 3150 Is Reducible 192 6300 lines
208 6048 Is Reducible 200 6048 lines
212 5040 Is Reducible 240 5040 lines
213 5040 Is Reducible 240 5040 lines
215 1575 Is Reducible 256 4725 lines
220 1890 Is Reducible 320 3780 lines
225 525 Is Reducible 384 3150 lines
226 3150 Is Reducible 384 3150 lines
230 1008 Is Reducible 600 2016 lines
231 1890 Is Reducible 640 1890 lines
233 1575 Is Reducible 768 1575 lines
237 1008 Is Reducible 1200 1008 lines
241 315 Is Reducible 3840 315 lines
\end{verbatim}

\subsection{Concluding remarks}

Show the complexification of a quaternionic unitary matrix has determinant $1$.

\end{document}